\newtheorem*{thm}{Main Theorem}%[section] 
\newtheorem{prop}{Proposition}
\newtheorem{thm1}[prop]{Theorem}
\newtheorem{lem}[prop]{Lemma}
\theoremstyle{definition}
\newcommand{\Zb}{\mathbf{Z}}
\newcommand{\Nb}{\mathbf{N}}
\newcommand{\Wc}{\mathcal{W}}
\newcommand{\g}{\mathfrak{g}}
\newcommand{\h}{\mathfrak{h}}
\newcommand{\inv}{^{-1}}
\newcommand{\CC}{\mathrm{C}}
\newcommand{\la}{\langle}
\newcommand{\ra}{\rangle}
\DeclareMathOperator{\Ch}{Ch}
\DeclareMathOperator{\height}{ht}
\DeclareMathOperator{\dist}{d}
\newcommand{\co}{\colon\thinspace}
\let\@wraptoccontribs\wraptoccontribs
\begin{document}
%=====================
\title{Closed sets of real roots in Kac--Moody root systems}
%=====================
\author[P.-E. Caprace]{Pierre-Emmanuel Caprace}
\thanks{P.-E.C. is a F.R.S.-FNRS senior research associate.} 
\address{Universit\'e catholique de Louvain, IRMP, Chemin du Cyclotron 2, bte L7.01.02, 1348 Louvain-la-Neuve, Belgique}
\email{pe.caprace@uclouvain.be}
%=====================

%========================
\author[T. Marquis]{Timoth\'ee Marquis}
\thanks{T.M. is a F.R.S.-FNRS post-doctoral researcher.} 
\address{Universit\'e catholique de Louvain, IRMP, Chemin du Cyclotron 2, bte L7.01.02, 1348 Louvain-la-Neuve, Belgique}
\email{timothee.marquis@uclouvain.be}
%=====================

%\date{April 20, 2018}

\begin{abstract}
In this note, we provide a complete description of the closed sets of real roots in a Kac--Moody root system.
\end{abstract}

\maketitle

%\tableofcontents

\section{Introduction}
Let $A=(a_{ij})_{i,j\in I}$ be a generalised Cartan matrix, and let $\g(A)$ be the associated Kac--Moody algebra (see \cite{Kac}). Like (finite-dimensional) semisimple complex Lie algebras, $\g(A)$ possesses a \emph{root space decomposition}
$$\g(A)=\h\oplus\bigoplus_{\alpha\in\Delta}\g_{\alpha}$$
with respect to the adjoint action of a \emph{Cartan subalgebra} $\h$, with associated set of \emph{roots} $\Delta\subseteq\h^*$ contained in the $\mathbf{Z}$-span of the set $\Pi=\{\alpha_i \ | \ i\in I\}$ of \emph{simple roots}, as well as a \emph{Weyl group} $\Wc\subseteq\mathrm{GL}(\h^*)$ stabilising $\Delta$. However, as soon as $A$ is not a Cartan matrix (i.e. as soon as $\g(A)$ is infinite-dimensional), the set $\Delta^{re}:=\Wc.\Pi$ of \emph{real roots} is properly contained in $\Delta$. In some sense, the real roots of $\Delta$ are those that behave as the roots of a semisimple Lie algebra; in particular, $\dim\g_{\alpha}=1$ for all $\alpha\in\Delta^{re}$.

A subset $\Psi\subseteq\Delta$ is \emph{closed} if $\alpha+\beta\in\Psi$ whenever $\alpha,\beta\in\Psi$ and $\alpha+\beta\in\Delta$. Note that, denoting by $\g_{\Psi}$ the subspace $\g_{\Psi}:=\bigoplus_{\alpha\in\Psi}\g_{\alpha}$ of $\g(A)$, a subset $\Psi\subseteq\Delta^{re}$ is closed if and only if $\h\oplus\g_{\Psi}$ is a subalgebra of $\g(A)$; in particular, if $\Psi\subseteq\Delta^{re}$ is closed, the subalgebra generated by $\g_{\Psi}$ is contained in $\h\oplus\g_{\Psi}$. Closed sets of real roots in Kac--Moody root systems thus arise naturally, and the purpose of this note is to provide a complete description of these sets. Our main theorem is as follows. For each $\alpha\in\Delta^{re}$, let $\alpha^{\vee}\in\h$ be the \emph{coroot} of $\alpha$, i.e. the unique element of $[\g_{\alpha},\g_{-\alpha}]$ with $\la\alpha,\alpha^{\vee}\ra=2$.

\begin{thm}
Let $\Psi\subseteq\Delta^{re}$ be a closed set of real roots and let $\g$ be the subalgebra of $\g(A)$ generated by $\g_{\Psi}$. Set $\Psi_s:=\{\alpha\in\Psi \ | \ -\alpha\in\Psi\}$ and $\Psi_n:=\Psi\setminus\Psi_s$. Set also $\h_s:=\sum_{\gamma\in\Psi_s}\CC\gamma^{\vee}$, $\g_s:=\h_s\oplus\g_{\Psi_s}$ and $\g_n:=\g_{\Psi_n}$. Then
\begin{enumerate}
\item
$\g_s$ is a subalgebra and $\g_n$ is an ideal of $\g$. In particular, $\g=\h_s\oplus\g_{\Psi}=\g_s\ltimes\g_n$.
\item
$\g_n$ is nilpotent; it is the largest nilpotent ideal of $\g$.
\item
$\g_s$ is a semisimple finite-dimensional Lie algebra with Cartan subalgebra $\h_s$ and set of roots $\Psi_s$.
\end{enumerate}
\end{thm}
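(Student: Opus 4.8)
The plan is to set up a careful analysis of how root strings interact with the decomposition $\Psi = \Psi_s \sqcup \Psi_n$, treating the ``symmetric'' part $\Psi_s$ (roots appearing with both signs) separately from the ``nilpotent'' part $\Psi_n$. The first observation I would establish is that $\Psi_s$ is itself a closed subsystem of $\Delta^{re}$ that is symmetric, hence $\g_{\Psi_s}$ together with the coroots $\h_s$ spans a subalgebra; the key subclaim is that $\Psi_s$ is a \emph{finite} root system. For this, I would argue that any closed symmetric set of real roots is actually a finite (classical) root subsystem: the real roots in $\Psi_s$ all have the property that with $\alpha \in \Psi_s$ also $-\alpha \in \Psi_s$, so the reflections $s_\alpha$ preserve $\Psi_s$ (using that $s_\alpha(\beta) = \beta - \la\beta,\alpha^\vee\ra\alpha$ is obtained by iterated additions/subtractions inside a root string, all of whose members lie in $\Delta^{re}$ and in the closed set), and a $\Wc$-stable set of real roots on which the associated reflection subgroup acts is finite precisely when that reflection subgroup is finite; the latter follows because an infinite reflection subgroup of $\Wc$ contains an infinite-order element, which would force an infinite orbit of real roots inside $\Psi_s$, contradicting... actually I need $\Psi_s$ finite, so I would instead invoke the structure theory: a prenilpotent pair of real roots spans a finite subsystem, and symmetry forces $\Psi_s$ to decompose into irreducible finite pieces. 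This is the step I expect to be the main obstacle, and I would likely need a lemma (perhaps proved earlier in the note) stating that a closed symmetric subset of $\Delta^{re}$ is a finite root system; granting that, part (3) follows by the standard Serre-relations argument identifying $\g_s = \h_s \oplus \g_{\Psi_s}$ with the semisimple Lie algebra attached to the finite Cartan matrix $(\la\alpha_j,\alpha_i^\vee\ra)$ for a base of $\Psi_s$.

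Next I would prove that $\g_n = \g_{\Psi_n}$ is an ideal of $\g = \h_s \oplus \g_\Psi$. Since $\g$ is generated by $\g_\Psi$, it suffices to check $[\g_\gamma, \g_{\Psi_n}] \subseteq \g_{\Psi_n}$ for every $\gamma \in \Psi$, and also $[\h_s, \g_{\Psi_n}] \subseteq \g_{\Psi_n}$ (which is immediate since $\h_s$ acts diagonally). So fix $\gamma \in \Psi$ and $\beta \in \Psi_n$ with $\gamma + \beta \in \Delta$; closedness of $\Psi$ gives $\gamma + \beta \in \Psi$, and I must rule out $\gamma + \beta \in \Psi_s$, i.e. rule out $-(\gamma+\beta) \in \Psi$. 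If $-(\gamma+\beta) \in \Psi$ then $-(\gamma+\beta) + \gamma = -\beta$ would follow from closedness provided $-\beta \in \Delta$ — but $-\beta$ is a real root, so $-\beta \in \Delta^{re} \subseteq \Delta$, hence $-\beta \in \Psi$, contradicting $\beta \in \Psi_n$. The only gap is the case $\gamma + \beta \notin \Delta$ (nothing to prove) versus $\gamma + \beta \in \Delta$ but the sum $-(\gamma+\beta)+\gamma$ not being a root, which cannot happen since it equals $-\beta \in \Delta^{re}$. One subtlety: I should also handle sums $\gamma + \beta$ that are imaginary roots — but $\g_n \subseteq \g_\Psi$ and $\Psi \subseteq \Delta^{re}$, so $\g_{\Psi_n}$ only involves real root spaces, and the bracket $[\g_\gamma,\g_\beta]$ for $\gamma+\beta$ imaginary lands outside $\g_\Psi$, yet it must land inside $\g$; resolving this forces $\gamma + \beta$ to in fact be real whenever it is a root and $\gamma,\beta,\gamma+\beta$ all contribute — this is automatic because $\Psi \subseteq \Delta^{re}$ already guarantees every element of $\Psi$ is real. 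Hence $\g_n$ is an ideal, and combined with the subalgebra $\g_s$ and the vector-space decomposition $\g = \h_s \oplus \g_{\Psi_s} \oplus \g_{\Psi_n} = \g_s + \g_n$ with $\g_s \cap \g_n = 0$, we get $\g = \g_s \ltimes \g_n$, proving (1).

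For (2), I would show $\g_n$ is nilpotent by exhibiting a grading. Pick a linear functional $\lambda$ on the real span of $\Pi$ that is strictly positive on $\Psi_n$; such a $\lambda$ exists because $\Psi_n$ is ``one-sided'' — by construction no $\beta \in \Psi_n$ has $-\beta \in \Psi$, and a standard separation argument (the closed convex cone generated by $\Psi_n$ does not contain $0$, since $\Psi_s$ would otherwise absorb the offending elements by closedness) produces a separating hyperplane. Then $\g_n$ is graded by the values $\lambda(\beta) > 0$, with only finitely many graded pieces in any bounded range and brackets strictly increasing the grade; since $\Psi_n$ spans a finite set... no — $\Psi_n$ may be infinite, but the heights $\operatorname{ht}(\beta)$ of roots in any chain $\beta_1, \beta_1+\beta_2, \dots$ inside $\Psi_n$ are bounded because real roots have bounded... here I would instead use that any iterated bracket of length $N$ lands in root spaces $\g_\mu$ with $\lambda(\mu) \geq N \cdot \min_{\beta\in\Psi_n}\lambda(\beta)$, while $\mu$ must remain a real root (bounded coefficients are not available, but $\mu \in \Delta^{re}$ and $\lambda$ is bounded on $\Delta^{re} \cap \{\text{bounded height}\}$...). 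The cleanest route: show directly that $\g_n$ equals the sum of real root spaces for a closed set with no antisymmetric pairs, hence is contained in the nilpotent ``$\n_+$-like'' part after conjugating by $\Wc$ so that $\Psi_n \subseteq \Delta_+^{re}$; finiteness of the lower central series then follows from the corresponding fact for $\n_+$ restricted to real roots (every real root has a fixed coheight, so iterated brackets terminate). Maximality — $\g_n$ is the largest nilpotent ideal — would follow because $\g/\g_n \cong \g_s$ is semisimple hence has no nonzero nilpotent (even abelian) ideals, so any nilpotent ideal of $\g$ maps to $0$ in $\g_s$ and is therefore contained in $\g_n$. I expect the nilpotency-via-grading step to require the most care in pinning down why the grading is bounded below away from $0$ and why brackets of real root vectors giving imaginary roots vanish inside $\g$ — but this last point is handled by noting $\g \subseteq \h_s \oplus \g_\Psi$ and $\Psi$ contains only real roots, so no imaginary root space appears in $\g$ at all, forcing the relevant structure constants to vanish.
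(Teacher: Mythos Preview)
Your treatment of parts (1) and (3) is essentially the paper's: the combinatorial check that $\Psi_s$ is closed and $\Psi_n$ is an ideal in $\Psi$ is exactly Lemma~\ref{lemma:Levi_dec_roots}, and you correctly anticipate that the finiteness of the symmetric part is a separate lemma (Lemma~\ref{lemma:finite_root_system} in the paper, proved via the reflection subgroup and the dichotomy in Lemma~\ref{lemma:dictionary}). Your maximality argument for $\g_n$ (any nilpotent ideal dies in the semisimple quotient $\g/\g_n\cong\g_s$) also matches the paper.

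The genuine gap is the nilpotency of $\g_n$. Both routes you sketch break down. For the linear-functional route, the existence of $\lambda$ strictly positive on $\Psi_n$ is precisely the hard content: asserting that the convex cone on $\Psi_n$ misses $0$ is not a consequence of $\Psi_n\cap(-\Psi_n)=\varnothing$ alone, and for infinite $\Psi_n$ the cone need not be closed, so a separating hyperplane is not automatic. Even granting such a $\lambda$, a strictly positive grading on an infinite set only yields \emph{local} nilpotence: with no positive lower bound on $\{\lambda(\beta):\beta\in\Psi_n\}$ and no upper bound on $\lambda$-values of real roots, arbitrarily long iterated brackets are not excluded. Your fallback, conjugating $\Psi_n$ into $\Delta_+^{re}$, presumes a single $w\in\Wc$ with $w\Psi_n\subseteq\Delta_+$, i.e.\ that $\Psi_n$ is globally prenilpotent; this can fail for infinite $\Psi_n$, and in any case $\n_+$ is \emph{not} nilpotent in general (already in affine type), so ``iterated brackets terminate'' is false there.

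The paper's argument is genuinely different in this step. It first shows (Lemma~\ref{lem:NonEmptyIntersec}, the technical heart) that for any \emph{finite} $\Psi\subseteq\Phi$ with $\Phi$ closed and $\Phi\cap(-\Phi)=\varnothing$, the intersection $\bigcap_{\psi\in\Psi}H(\psi)$ contains a chamber; this is a geometric induction on the Davis complex using Lemma~\ref{lem:12}. It follows that every finite subset of $\Psi_n$ is prenilpotent, hence $\Psi_n$ is \emph{pro-nilpotent} (Proposition~\ref{prop:pronilpotent}). Nilpotency of $\g_n$ then comes from an external input: the uniform bound on the nilpotency class of $\g_{\Psi'}$ over all nilpotent $\Psi'$ (Lemma~\ref{lemma:pronilpotent_nilpotent}, citing \cite{Cap_nil}). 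Without something playing the role of that uniform bound, a direct grading argument of the kind you propose does not close.
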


Note that the possible closed root subsystems $\Psi_s$ in the statement of the Main Theorem were explicitely determined in \cite{KV18} when $A$ is an affine GCM.

\section{Preliminaries}

\subsection{Kac--Moody algebras}
The general reference for this section is \cite[Chapters~1--5]{Kac}. 

Let $A=(a_{ij})_{i,j\in I}$ be a {\bf generalised Cartan matrix} with indexing set $I$, and let $(\h,\Pi,\Pi^{\vee})$ be a realisation of $A$ in the sense of \cite[\S 1.1]{Kac}, with set of {\bf simple roots} $\Pi=\{\alpha_i \ | \ i\in I\}$ and set of {\bf simple coroots} $\Pi^{\vee}=\{\alpha_i^{\vee} \ | \ i\in I\}$. Let $\g(A)$ be the corresponding {\bf Kac--Moody algebra} (see \cite[\S 1.2-1.3]{Kac}). Then $\g(A)$ admits a root space decomposition 
$$\g(A)=\h\oplus\bigoplus_{\alpha\in\Delta}\g_{\alpha}$$
with respect to the adjoint action of the {\bf Cartan subalgebra} $\h$, with associated set of {\bf roots} $\Delta\subseteq\h^*$. Set $Q:=\bigoplus_{i\in I}\Zb\alpha_i\subseteq\h^*$ and $Q_+:=\bigoplus_{i\in I}\Nb\alpha_i\subseteq Q$. Then $\Delta\subseteq Q_+\cup -Q_+$, and we let $\Delta_+:=\Delta\cap Q_+$ (resp. $\Delta_-:=\Delta\setminus\Delta_+=-\Delta_+$) denote the set of {\bf positive} (resp. {\bf negative}) {\bf roots}. The {\bf height} of a root $\alpha=\sum_{i\in I}n_i\alpha_i\in Q$ is the integer $\height(\alpha):=\sum_{i\in I}n_i$. Thus a root is positive if and only if it has positive height. We also introduce a partial order $\leq$ on $Q$ defined by 
$$\alpha\leq\beta\iff \beta-\alpha\in Q_+.$$

The {\bf Weyl group} of $\g(A)$ is the subgroup $\Wc$ of $\mathrm{GL}(\h^*)$ generated by the {\bf fundamental reflections}
$$r_i=r_{\alpha_i}\co\h^*\to\h^*: \alpha\mapsto \alpha-\langle\alpha,\alpha_i^{\vee}\rangle\alpha_i$$
for $i\in I$. Alternatively, $\Wc$ can be identified with the subgroup of $\mathrm{GL}(\h)$ generated by the reflections $r_i\co\h\to\h:h\mapsto h-\la\alpha_i,h\ra\alpha_i^{\vee}$. Then $\Wc$ stabilises $\Delta$, and we let $\Delta^{re}:=\Wc.\Pi$ (resp. $\Delta^{re}_+:=\Delta^{re}\cap\Delta_+$) denote the set of (resp. {\bf positive}) {\bf real roots}. For each $\alpha\in\Delta^{re}$, say $\alpha=w\alpha_i$ for some $w\in\Wc$ and $i\in I$, the element $\alpha^{\vee}:=w\alpha_i^{\vee}$ only depends on $\alpha$, and is called the {\bf coroot} of $\alpha$. One can then also define the {\bf reflection} $r_{\alpha}\in\Wc$ associated to $\alpha$ as 
$$r_{\alpha}=wr_iw\inv\co \h^*\to\h^*: \beta\mapsto \beta-\la\beta,\alpha^{\vee}\ra\alpha.$$

Let $\Psi\subseteq\Delta$ be a subset of roots. We call $\Psi$ {\bf closed} if $\alpha+\beta\in\Psi$ whenever $\alpha,\beta\in\Psi$ and $\alpha+\beta\in\Delta$. The {\bf closure} $\overline{\Psi}$ of $\Psi$ is the smallest closed subset of $\Delta$ containing $\Psi$. A subset $\Psi'\subseteq\Psi$ is an {\bf ideal} in $\Psi$ if $\alpha+\beta\in\Psi'$ whenever $\alpha\in\Psi$, $\beta\in\Psi'$ and $\alpha+\beta\in\Delta$. The set $\Psi$ is {\bf prenilpotent} if there exist some $w,w'\in\Wc$ such that $w\Psi\subseteq \Delta_+$ and $w'\Psi\subseteq\Delta_-$; in that case, $\Psi$ is finite and contained in $\Delta^{re}$. If $\Psi$ is both prenilpotent and closed, it is called {\bf nilpotent}. We further call $\Psi$ {\bf pro-nilpotent} if it is a directed union of nilpotent subsets. 

The above terminology is motivated by its Lie algebra counterpart: consider the subspace $\g_{\Psi}:=\bigoplus_{\alpha\in\Psi}\g_{\alpha}$ for each $\Psi\subseteq\Delta$. If $\Psi$ is closed, then $\h\oplus\g_{\Psi}$ is a subalgebra. If, moreover, $\Psi\cap-\Psi=\varnothing$ and $\Psi'$ is an ideal in $\Psi$, then $\g_{\Psi'}$ is an ideal in $\g_{\Psi}$. If $\Psi$ is nilpotent, then $\g_{\Psi}$ is a nilpotent subalgebra. This remains valid for pro-nilpotent sets of roots.
\begin{lem}\label{lemma:pronilpotent_nilpotent}
Let $\Psi\subseteq\Delta$ be a pro-nilpotent set of roots. Then $\g_{\Psi}$ is a nilpotent subalgebra.
\end{lem}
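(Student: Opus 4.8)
The plan is to reduce the pro-nilpotent case to the nilpotent case, and then to establish nilpotency of $\g_\Psi$ for $\Psi$ nilpotent by exhibiting a concrete descending central series. Write $\Psi=\bigcup_{j}\Psi_j$ as a directed union of nilpotent subsets $\Psi_j$. A Lie algebra is nilpotent as soon as it is generated by a set of elements $x_1,\dots,x_n$ such that all sufficiently long iterated brackets of the $x_i$ vanish; more to the point here, a subalgebra $\mf a$ of a fixed ambient Lie algebra is nilpotent if there is a uniform $N$ with $[\mf a,[\mf a,[\dots,[\mf a,\mf a]\dots]]]=0$ ($N$ brackets). So I would first prove the statement for $\Psi$ nilpotent, obtaining an explicit bound $N=N(\Psi)$ on the nilpotency class, and then argue that if this bound can be taken uniform over the $\Psi_j$ then $\g_\Psi$ is nilpotent; the point is that the directedness lets us control which brackets are non-zero.

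\medskip

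For $\Psi$ nilpotent: by definition there is $w\in\Wc$ with $w\Psi\subseteq\Delta_+$, and replacing $\Psi$ by $w\Psi$ (which only conjugates $\g_\Psi$ by an automorphism of $\g(A)$ coming from the action of $\Wc$, hence preserves nilpotency — or, more elementarily, one just works with $w\Psi$ and transports the conclusion back) we may assume $\Psi\subseteq\Delta_+$. Now set $h_0:=\max\{\height(\alpha)\mid\alpha\in\Psi\}$, which is finite since $\Psi$ is finite. Since $\g_\Psi\subseteq\bigoplus_{\alpha\in\Psi}\g_\alpha\subseteq\bigoplus_{\height\geq 1}\g_\gamma$ and brackets add heights, an $N$-fold bracket $[\g_\Psi,[\g_\Psi,\dots]]$ lands in $\bigoplus_{\height(\gamma)\geq N}\g_\gamma$; but it is also a subspace of $\g(A)$ spanned by root vectors $\g_\gamma$ with $\gamma$ a sum of $N$ elements of $\Psi\subseteq Q_+$, so every such $\gamma$ has $\height(\gamma)\geq N$. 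Taking $N>h_0\cdot|\Psi|$, or indeed observing that there are only finitely many roots of the form $\alpha_{i_1}+\dots+\alpha_{i_k}$ with each summand in $\Psi$ and with bounded height contribution, forces all such $N$-fold brackets to vanish: more carefully, $\overline\Psi$ (the closure inside $\Delta$) is again prenilpotent hence finite, and $[\g_\Psi,[\g_\Psi,\dots]]\subseteq\g_{\overline\Psi}$, so once $N$ exceeds $\max\{\height(\gamma)\mid\gamma\in\overline\Psi\}$ the $N$-fold bracket is zero. Hence $\g_\Psi$ is nilpotent of class at most that maximum.

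\medskip

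Finally, for the directed union: fix $\alpha,\beta\in\Psi$. Since the union is directed, $\alpha,\beta\in\Psi_j$ for some $j$, so $[\g_\alpha,\g_\beta]\subseteq\g_{\Psi_j}\subseteq\g_\Psi$, confirming $\g_\Psi$ is a subalgebra — but more importantly the same directedness shows that any finite collection of root vectors from $\g_\Psi$ lies in a single $\g_{\Psi_j}$, and hence the iterated bracket of any $N$ elements of $\g_\Psi$ is computed inside some $\g_{\Psi_j}$. Thus the whole subalgebra $\g_\Psi$ is the directed union $\bigcup_j\g_{\Psi_j}$ of nilpotent subalgebras, and moreover every $N$-fold bracket of elements of $\g_\Psi$ lies in $\bigcup_j\g_{\overline{\Psi_j}}$; the subtlety, and the main thing to get right, is that a priori the nilpotency classes of the $\g_{\Psi_j}$ need not be bounded, so $\g_\Psi$ need not be nilpotent as an abstract Lie algebra unless one uses the ambient structure. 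The resolution — and I expect this to be the crux — is that $\g_\Psi\subseteq\g_{\overline\Psi}$ where $\overline\Psi$ is the closure of $\Psi$ in $\Delta$, that $\overline\Psi$ is itself pro-nilpotent (being a directed union of the $\overline{\Psi_j}$, which are nilpotent by definition of closure and prenilpotency being inherited), and that a pro-nilpotent $\overline\Psi$ which is moreover \emph{bounded} in height — equivalently, whose elements lie in finitely many $\Wc$-orbits capped appropriately — gives a genuine bound. Concretely I would argue: either $\Psi$ is contained in a single nilpotent set (finitely many roots, done by the height argument above), or one reduces to that case by noting that a pro-nilpotent set which is not contained in a nilpotent set cannot be made of real roots all on one side — but $\Psi\subseteq\Delta^{re}$ with the directed-union structure forces, via $w\Psi_j\subseteq\Delta_+$ being compatible across $j$ by directedness, that in fact $\Psi$ itself satisfies $w\Psi\subseteq\Delta_+$ for a single $w$ and hence is prenilpotent, i.e.\ nilpotent, reducing to the finite case. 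So the real content is this last reduction: showing that a directed union of nilpotent sets inside $\Delta^{re}$ is again prenilpotent (possibly infinite but still one-sided in a uniform way), after which the height grading of $\g(A)$ delivers nilpotency for free.
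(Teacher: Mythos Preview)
You correctly identify the architecture: handle nilpotent $\Psi$ first (your height argument after conjugating into $\Delta_+$ is fine), then pass to the directed union \emph{provided} one has a uniform bound on the nilpotency classes of the $\g_{\Psi_j}$. You also correctly flag this uniform bound as the crux. However, your proposed resolution of that crux does not work.

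The claim that directedness of the $\Psi_j$ forces the witnesses $w_j$ (with $w_j\Psi_j\subseteq\Delta_+$) to be ``compatible'', yielding a single $w$ with $w\Psi\subseteq\Delta_+$, is unjustified: directedness of the sets says nothing about the $w_j$. In Davis-complex language you are asserting that if $\bigcap_{\psi\in F}H(\psi)$ contains a chamber for every finite $F\subseteq\Psi$ then so does $\bigcap_{\psi\in\Psi}H(\psi)$; but no compactness is available here, and this can fail. Even granting one-sidedness, the height-grading step would not conclude: for infinite $\Psi\subseteq\Delta_+$ the heights in $\Psi$ are unbounded, so the containment $C^N(\g_\Psi)\subseteq\bigoplus_{\height(\gamma)\geq N}\g_\gamma$ does not force $C^N(\g_\Psi)=0$ for any $N$. (Your own argument in the finite case used precisely the finiteness of $\Psi$ to bound the relevant heights.)

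The paper does not try to manufacture a single $w$. It invokes an external result, \cite[Theorem~1.1]{Cap_nil}, which supplies a uniform bound, depending only on $A$, on the nilpotency class of $\g_{\Psi'}$ for \emph{every} nilpotent set $\Psi'$; with that in hand the directed-union step is immediate. Producing such a uniform bound is genuinely nontrivial and is not recovered by the reduction you sketch.
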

\begin{proof}
By definition, $\g_{\Psi}$ is a directed union of nilpotent subalgebras $\g_{\Psi_n}$ associated to nilpotent sets of roots $\Psi_n\subseteq\Psi$. Since there is a uniform bound on the nilpotency class of these subalgebras by \cite[Theorem~1.1]{Cap_nil}, the claim follows.
\end{proof}

\subsection{Davis complexes}%chambers, galleries, correspondence real roots with half-spaces, walls (parallel)
The general reference for this section is \cite{BrownAbr} (see also \cite{Nos11}). 

The pair $(\Wc,S:=\{r_i \ | \ i\in I\})$ is a Coxeter system, and we let $\Sigma=\Sigma(\Wc,S)$ denote the corresponding {\bf Davis complex}. Thus $\Sigma$ is a $\mathrm{CAT}(0)$ cell complex whose underlying $1$-skeleton $\Sigma^{(1)}$ is the Cayley graph of $(\Wc,S)$. Moreover, the $\Wc$-action on $\Sigma$ is by cellular isometries, is proper, and induces on $\Sigma^{(1)}$ the canonical $\Wc$-action on its Cayley graph. The vertices of $\Sigma^{(1)}$ (which we identify with the elements of $\Wc$) are called the {\bf chambers} of $\Sigma$, and their set is denoted $\Ch(\Sigma)$. The vertex $1_{\Wc}$ is called the {\bf fundamental chamber} of $\Sigma$, and is denoted $C_0$. Two chambers of $\Sigma$ are {\bf adjacent} if they are adjacent in the Cayley graph of $(\Wc,S)$.  
   A {\bf gallery} is a sequence $\Gamma=(x_0,x_1,\dots,x_d)$ of chambers such that $x_{i-1}$ and $x_i$ are distinct adjacent chambers for each $i=1,\dots,d$. The integer $d\in\Nb$ is the {\bf length} of $\Gamma$, and $\Gamma$ is called {\bf minimal} if it is a gallery of minimal length between $x_0$ and $x_d$. In that case, $d$ is called the {\bf chamber distance} between $x_0$ and $x_d$, denoted $\mathrm{d}_{\Ch}(x_0,x_d)$. If $X$ is a nonempty subset of $\Ch(\Sigma)$ and $x\in\Ch(\Sigma)$, we also set $\mathrm{d}_{\Ch}(x,X):=\min_{x'\in X}\mathrm{d}_{\Ch}(x,x')$.

Given two distinct adjacent chambers $x,x'\in\Ch(\Sigma)$, any chamber of $\Sigma$ is either closer to $x$ or to $x'$ (for the chamber distance). This yields a partition of $\Ch(\Sigma)$ into two subsets, which are the underlying chamber sets of two closed convex subcomplexes of $\Sigma$, called {\bf half-spaces}. If $x=C_0$ and $x'=sC_0$ for some $s\in S$, we let $H(\alpha_s)$ denote the corresponding half-space containing $C_0$ and $H(-\alpha_s)$ the other half-space. If $x=wC_0$ and $x'=wsC_0$ ($w\in\Wc$ and $s\in S$), the corresponding half-spaces $H(\pm\alpha)=wH(\pm\alpha_s)$ depend only on $\alpha=w\alpha_s$. The map
$$\Delta^{re}\to \{\textrm{half-spaces of $\Sigma$}\}: \alpha\mapsto H(\alpha)$$
is a $\Wc$-equivariant bijection mapping $\Delta^{re}_+$ onto the set of half-spaces of $\Sigma$ containing $C_0$.
The {\bf wall} associated to $\alpha\in\Delta^{re}$ is the closed convex subset $\partial\alpha:=H(\alpha)\cap H(-\alpha)$ of $\Sigma$, and coincides with the fixed-point set of the reflection $r_{\alpha}\in\Wc$. A wall $\partial\alpha$ {\bf separates} two chambers $x,x'\in\Ch(\Sigma)$ if $x\in H(\epsilon\alpha)$ and $x'\in H(-\epsilon\alpha)$ for some $\epsilon\in\{\pm 1\}$. 

We conclude this preliminary section with a short dictionary between roots and half-spaces. Call two roots $\alpha,\beta\in\Delta^{re}$ {\bf nested} if either $H(\alpha)\subseteq H(\beta)$ or $H(\beta)\subseteq H(\alpha)$.
\begin{lem}\label{lemma:dictionary}
Let $\alpha,\beta\in\Delta^{re}$ with $\alpha\neq\pm\beta$, and set $m:=\la \alpha,\beta^{\vee}\ra$ and $n:=\la\beta,\alpha^{\vee}\ra$.
\begin{enumerate}
\item
$\{\alpha,\beta\}$ is not prenilpotent $\iff$ $\{\alpha,-\beta\}$ is nested $\iff$ $m,n<0$ and $mn\geq 4$ $\iff$ $\overline{\{\alpha,\beta\}}\not\subseteq\Delta^{re}$.
\item
$\la r_{\alpha},r_{\beta}\ra\subseteq\Wc$ is finite $\iff$ $\partial\alpha\cap\partial\beta\neq\varnothing$ $\iff$ $\{\pm\alpha,\pm\beta\}$ does not contain any nested pair $\iff$ $\overline{\{\pm\alpha,\pm\beta\}}$ is a root system of type $A_1\times A_1$, $A_2$, $B_2$ or $G_2$.
\item
A subset $\Psi\subseteq\Delta^{re}$ of roots is prenilpotent if and only if the subcomplexes $\bigcap_{\alpha\in\Psi}H(\alpha)$ and $\bigcap_{\alpha\in\Psi}H(-\alpha)$ of $\Sigma$ both contain at least one chamber.
\end{enumerate}
\end{lem}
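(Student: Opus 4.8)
The plan is to translate all three statements through the $\Wc$-equivariant bijection $\alpha\mapsto H(\alpha)$ between $\Delta^{re}$ and the half-spaces of $\Sigma$, and to analyse pairs of roots through the rank-two reflection subgroup $\la r_\alpha,r_\beta\ra$. The dictionary entry I will use repeatedly is that a chamber $wC_0$ lies in $H(\gamma)$ if and only if $w\inv\gamma\in\Delta^{re}_+$: indeed $wC_0\in H(\gamma)$ iff $C_0\in w\inv H(\gamma)=H(w\inv\gamma)$, and $H(\delta)$ contains $C_0$ iff $\delta\in\Delta^{re}_+$. Part (3) is then a direct translation. Given $\Psi\subseteq\Delta^{re}$, this entry shows that $\bigcap_{\alpha\in\Psi}H(\alpha)$ contains $wC_0$ iff $w\inv\alpha\in\Delta_+$ for every $\alpha\in\Psi$, i.e. iff $u:=w\inv$ satisfies $u\Psi\subseteq\Delta_+$; so $\bigcap_{\alpha\in\Psi}H(\alpha)$ contains a chamber iff some $u\in\Wc$ has $u\Psi\subseteq\Delta_+$. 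Likewise $wC_0\in H(-\alpha)$ for all $\alpha$ iff $w\inv\alpha\in\Delta_-$ for all $\alpha$, so $\bigcap_{\alpha\in\Psi}H(-\alpha)$ contains a chamber iff some $u'\in\Wc$ has $u'\Psi\subseteq\Delta_-$. The conjunction of these two conditions is precisely the definition of prenilpotence, proving (3).

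For part (2) I would first record that $\la r_\alpha,r_\beta\ra$ is finite iff $\partial\alpha\cap\partial\beta\neq\varnothing$: if the walls meet at a point $p$, then $\la r_\alpha,r_\beta\ra$ fixes $p$ (each wall being the fixed-point set of its reflection) and is finite because the $\Wc$-action on $\Sigma$ is proper; conversely a finite group acting on the $\mathrm{CAT}(0)$ complex $\Sigma$ has a fixed point, which then lies on both walls. Next, the walls cross iff all four quadrants $H(\pm\alpha)\cap H(\pm\beta)$ contain a chamber, which is visibly equivalent to no half-space among $H(\pm\alpha),H(\pm\beta)$ being contained in another, i.e. to $\{\pm\alpha,\pm\beta\}$ containing no nested pair. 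Finally, when the walls cross, $\la r_\alpha,r_\beta\ra$ is finite dihedral, so $\overline{\{\pm\alpha,\pm\beta\}}$ is a finite (hence nilpotent) set of real roots forming a rank-two root subsystem; the crystallographic restriction ($m,n\in\Zb$, forcing $mn\in\{0,1,2,3\}$) leaves only the types $A_1\times A_1,A_2,B_2,G_2$, each of which is finite with crossing walls, closing the loop.

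For part (1) I combine (3) with a short wall analysis. By (3), $\{\alpha,\beta\}$ is prenilpotent iff both $H(\alpha)\cap H(\beta)$ and $H(-\alpha)\cap H(-\beta)$ contain a chamber, so it fails to be prenilpotent iff one of these opposite quadrants is chamber-free. Now $H(\alpha)\cap H(\beta)$ is chamber-free iff $\Ch(H(\alpha))\subseteq\Ch(H(-\beta))$, i.e. $H(\alpha)\subseteq H(-\beta)$, which is nestedness of $\{\alpha,-\beta\}$; the opposite quadrant gives the complementary nesting $H(-\beta)\subseteq H(\alpha)$, again nestedness of $\{\alpha,-\beta\}$. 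This proves the first equivalence. For the arithmetic criterion I pass to $\la r_\alpha,r_\beta\ra$: a direct computation gives that $r_\alpha r_\beta$ has trace $mn-2$ and determinant $1$ on $\mathrm{span}(\alpha,\beta)$, so the group is infinite (equivalently, by (2), the walls do not cross) precisely when $mn\geq4$, provided one knows $mn\geq0$ for real roots. To obtain this sign constraint and to identify the nested pair, I use that in the non-crossing case $\la r_\alpha,r_\beta\ra$ is infinite dihedral with a fundamental chamber bounded by $\partial\alpha$ and $\partial\beta$, opposite to the unique chamber-free quadrant; the two half-spaces containing this fundamental chamber exhibit a pair $\epsilon\alpha,\eta\beta$ ($\epsilon,\eta\in\{\pm1\}$) as simple roots, whence $\epsilon\eta m=\la\epsilon\alpha,(\eta\beta)^\vee\ra\leq0$ and $\epsilon\eta n\leq0$. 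This forces $m,n$ to share a sign (so $mn\geq0$), and a sign-chase shows that the empty quadrant is $H(-\alpha)\cap H(-\beta)$ or $H(\alpha)\cap H(\beta)$—i.e. $\{\alpha,-\beta\}$ is the nested pair—exactly when $\epsilon=\eta$, i.e. when $m,n<0$. For the last equivalence: if $\{\alpha,\beta\}$ is prenilpotent then so is its closure (since $\Delta_\pm$ are closed), which is therefore nilpotent and contained in $\Delta^{re}$; whereas if $m,n<0$ and $mn\geq4$, the roots $\alpha,\beta$ are simple roots of a rank-two Kac--Moody algebra of non-finite type, whose positive root system lies in $\overline{\{\alpha,\beta\}}$ and contains imaginary roots, so $\overline{\{\alpha,\beta\}}\not\subseteq\Delta^{re}$.

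The main obstacle is the rank-two sign analysis in part (1): relating the purely algebraic integers $m,n$ to the geometric configuration of the two walls, and in particular establishing $mn\geq0$ together with the precise dichotomy ``$\{\alpha,-\beta\}$ nested $\iff m,n<0$''. The decisive point is to recognise $r_\alpha,r_\beta$ as a canonical generating pair of the dihedral subgroup $\la r_\alpha,r_\beta\ra$, so that in the non-crossing case the unique chamber-free quadrant pins down $\pm\alpha,\pm\beta$ as simple roots of a rank-two Kac--Moody root system and forces the Cartan integers into the required sign pattern; everything else is routine bookkeeping with half-spaces.
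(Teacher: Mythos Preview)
Your argument for (3) coincides with the paper's. For (2) you give essentially the same proof, with the minor variation that for the converse of the first equivalence you invoke the $\mathrm{CAT}(0)$ fixed-point theorem, whereas the paper argues the contrapositive (disjoint walls force an infinite dihedral group); both are fine. For (1) the paper simply cites \cite[Lemma~8.42(3)]{BrownAbr} and \cite[Proposition~4.31(3)]{thesemoi}, so your self-contained treatment via (3) and a rank-two analysis is genuinely more informative.

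The one soft spot is the sign analysis in (1). You assert that when the walls are disjoint the pair $\epsilon\alpha,\eta\beta$ bounding the ``strip'' behaves as a pair of simple roots, \emph{whence} $\langle\epsilon\alpha,(\eta\beta)^\vee\rangle\le 0$; but this last implication is exactly what needs proof, and it does not follow just from the half-space picture. The clean fix is to use the invariant bilinear form: for real roots one has $(\gamma|\gamma)>0$, and $m=2(\alpha|\beta)/(\beta|\beta)$, $n=2(\alpha|\beta)/(\alpha|\alpha)$, so $m$ and $n$ always share the sign of $(\alpha|\beta)$, giving $mn\ge 0$ unconditionally. Combined with your trace computation this yields ``walls disjoint $\iff mn\ge 4$''. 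For the remaining dichotomy (which nested pair occurs), note that if $m,n<0$ then $\alpha,\beta$ serve as simple roots of a rank-two subalgebra of non-finite type, so every $\Nb$-combination $p\alpha+q\beta$ that is a root lies in $\Delta_+$ after conjugating $\alpha,\beta$ into $\Delta_+$; since this sub-root-system contains an imaginary root (hence $\overline{\{\alpha,\beta\}}\not\subseteq\Delta^{re}$, as you already argue) and in particular infinitely many real roots in $\Nb\alpha+\Nb\beta$, the pair $\{\alpha,\beta\}$ cannot be prenilpotent, forcing $\{\alpha,-\beta\}$ to be the nested pair by your first equivalence. Replacing $\beta$ by $-\beta$ handles the case $m,n>0$. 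With this patch your proof of (1) is complete.
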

\begin{proof}
(1) The first equivalence is \cite[Lemma~8.42(3)]{BrownAbr}, and the second and third equivalences follow from \cite[Proposition~4.31(3)]{thesemoi} (see also \cite[Exercise~7.42]{KMGbook}).

(2) For the first equivalence, note that $\la r_{\alpha},r_{\beta}\ra$ fixes $\partial\alpha\cap\partial\beta$. Hence if $\partial\alpha\cap\partial\beta\neq\varnothing$, then $\la r_{\alpha},r_{\beta}\ra$ is finite (because the $\Wc$-action on $\Sigma$ is proper, hence has finite point stabilisers). Conversely, if $\partial\alpha\cap\partial\beta=\varnothing$, then $r_{\alpha},r_{\beta}$ generate an infinite dihedral group.

For the second equivalence, note that since half-spaces are convex, they are in particular arc-connected, and hence if $\partial\alpha\cap\partial\beta=\varnothing$ then either $\{\alpha,\beta\}$ or $\{\alpha,-\beta\}$ is nested. Conversely, if $\{\pm\alpha,\pm\beta\}$ contains a nested pair, then $\la r_{\alpha},r_{\beta}\ra\subseteq\Wc$ is an infinite dihedral group.

Finally, for the third equivalence, note that $\la r_{\alpha},r_{\beta}\ra$ stabilises $R:=\overline{\{\pm\alpha,\pm\beta\}}$, and that $\la r_{\alpha},r_{\beta}\ra$ is finite if and only if $R$ is finite. In this case, $R$ is a (reduced) root system of rank $2$ in the sense of \cite[VI, \S 1.1]{Bourbaki}, and hence of one of the types $A_1\times A_1$, $A_2$, $B_2$ or $G_2$ by \cite[VI, \S 4.2 Th\'eor\`eme 3]{Bourbaki}, as desired.

(3) $\Psi$ is prenilpotent if and only if there exist $v,w\in\Wc$ such that $v\Psi\subseteq \Delta^{re}_+$ (i.e. $v\inv C_0\in\bigcap_{\alpha\in\Psi}H(\alpha)$) and $w\Psi\subseteq \Delta^{re}_-$ (i.e. $w\inv C_0\in\bigcap_{\alpha\in\Psi}H(-\alpha)$).
\end{proof}

\section{Closed and prenilpotent sets of roots}

We shall need the following lemma, which is a slight variation of  \cite[Lemma~12]{CapAGT06}.  

\begin{lem}\label{lem:12}
Let $x\in \Ch(\Sigma)$ be a chamber and $\alpha \in \Delta^{re}$ be a root such that $x \not \in H(\alpha)$. Let $y \in \Ch(\Sigma)$ be a chamber contained in $H(\alpha)$ and at minimal distance from $x$, and let $\beta \in \Delta^{re}$ be a root such that $H(\beta)$ contains $x$ but not $y$, and that $H(-\beta)$ contains a chamber adjacent to $x$. Assume that $\beta \neq -\alpha$. Then:
\begin{enumerate}[label=(\roman*)]
\item $\langle \alpha, \beta^\vee \rangle < 0$. 
\item $ x \not \in H(r_\beta(\alpha))$. 
\end{enumerate}	
\end{lem}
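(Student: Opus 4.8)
The plan is to translate everything into the language of the Davis complex $\Sigma$ and argue geometrically, exactly as in the original \cite[Lemma~12]{CapAGT06}. Recall the setup: $x\notin H(\alpha)$, so $x\in H(-\alpha)$; the chamber $y\in H(\alpha)$ is chosen at minimal chamber distance from $x$; and $H(\beta)$ contains $x$ but not $y$, while $H(-\beta)$ contains a chamber $x'$ adjacent to $x$. Since $x\in H(\beta)$ and $x'\in H(-\beta)$ are adjacent, the wall $\partial\beta$ separates $x$ from $x'$, so $x'=r_\beta x$ and $\partial\beta$ is one of the ``panels'' at $x$; in Coxeter-group terms, if $x=wC_0$ then $w\inv r_\beta w\in S$.

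For part (i), the first observation is that $\partial\alpha$ and $\partial\beta$ must cross, i.e. $\partial\alpha\cap\partial\beta\neq\varnothing$. Indeed, consider a minimal gallery from $x$ to $y$: it crosses $\partial\alpha$ (since $x\in H(-\alpha)$, $y\in H(\alpha)$) and it crosses $\partial\beta$ (since $x\in H(\beta)$, $y\in H(-\beta)$). I claim it crosses $\partial\beta$ on its very first step, i.e. its first chamber after $x$ is in $H(-\beta)$: otherwise one could replace that first step by the step $x\to x'=r_\beta x$ and still reach $y$ without increasing length, but then $r_\beta x\in H(-\beta)\subseteq \ldots$ would give a gallery from $x'\in H(-\beta)$; a cleaner way is to note that $\mathrm{d}_{\Ch}(x',y)<\mathrm{d}_{\Ch}(x,y)$ would contradict the choice of $y$ unless $x'\notin H(\alpha)$, and to run the standard exchange-condition argument showing that if $\partial\alpha$ and $\partial\beta$ did not cross, the two walls would be ``parallel on the same side'' forcing $x'\in H(\alpha)$ or forcing a shorter gallery. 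Once $\partial\alpha\cap\partial\beta\neq\varnothing$, Lemma~\ref{lemma:dictionary}(2) tells us $\langle r_\alpha, r_\beta\rangle$ is finite, so $m:=\langle\alpha,\beta^\vee\rangle$ and $n:=\langle\beta,\alpha^\vee\rangle$ satisfy $mn\in\{0,1,2,3\}$ and $m,n$ have the same sign (or are zero). To pin down the sign, I use the separation data: $x\in H(\beta)\cap H(-\alpha)$ and $y\in H(-\beta)\cap H(\alpha)$, so both walls separate $x$ from $y$ but with ``opposite orientations'', which in a rank-two (finite dihedral) residue forces $\langle\alpha,\beta^\vee\rangle<0$ — concretely, in the dihedral subgroup $\langle r_\alpha,r_\beta\rangle$ the roots $\alpha$ and $\beta$ point to the same side relative to $x$ only if the angle between them is obtuse, i.e. $m<0$. (The case $m=0$ is excluded because then $\partial\alpha=\partial(r_\beta\alpha)$ would make $H(\beta)$ and $H(-\beta)$ treat $\partial\alpha$ symmetrically, contradicting that $x\in H(\beta)$ and $y\in H(-\beta)$ lie on opposite sides of $\partial\alpha$.)

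For part (ii), the point is that $r_\beta(\alpha)=\alpha - \langle\alpha,\beta^\vee\rangle\beta = \alpha - m\beta$ with $m<0$ by part (i), so ``$r_\beta\alpha = \alpha + |m|\beta$'' is, heuristically, ``more positive'' than $\alpha$ in the $\beta$-direction. Geometrically, $H(r_\beta\alpha)=r_\beta H(\alpha)$, and I want $x\notin r_\beta H(\alpha)$, equivalently $r_\beta x=x'\notin H(\alpha)$. Now $x'$ is adjacent to $x$, and $x\notin H(\alpha)$; if also $x'\notin H(\alpha)$ we are done, so suppose for contradiction $x'\in H(\alpha)$. Then $\partial\alpha$ separates $x$ from $x'$, so $\partial\alpha$ is the panel at $x$ in the direction of $x'$ — but that panel is $\partial\beta$ (the wall separating $x$ from $x'$ is unique since they are adjacent), forcing $\partial\alpha=\partial\beta$ and hence $\alpha=\pm\beta$, contradicting $\alpha\neq\beta$ (given) and $\alpha\neq-\beta$ (hypothesis). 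Hence $x'\notin H(\alpha)$, i.e. $r_\beta x\notin H(\alpha)$, i.e. $x\notin r_\beta\inv H(\alpha)=r_\beta H(\alpha)=H(r_\beta\alpha)$, as desired. Actually this argument for (ii) does not even use (i) directly, only the adjacency of $x,x'$ and $\alpha\neq\pm\beta$; so I expect (ii) to be the easy part and will present it in a couple of lines.

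The main obstacle is the careful first step of part (i): proving $\partial\alpha\cap\partial\beta\neq\varnothing$ and then extracting the correct sign of $m$. The cleanest route is the exchange-condition / minimal-gallery manipulation: let $\Gamma$ be a minimal gallery from $x$ to $y$; I will show the first wall it crosses is $\partial\beta$ (using minimality of $\mathrm{d}_{\Ch}(x,y)$ among chambers of $H(\alpha)$, since the reflection $r_\beta$ fixes $x'$ adjacent to $x$ and $r_\beta x = x'$, so $\mathrm{d}_{\Ch}(x',y)=\mathrm{d}_{\Ch}(x,y)-1$), and then that $\partial\alpha$ is crossed by $\Gamma$ as well; if $\partial\alpha$ and $\partial\beta$ were disjoint, $\beta$ and $-\alpha$ (or $\beta$ and $\alpha$) would be nested by Lemma~\ref{lemma:dictionary}(2), and chasing which half-space contains $x$ versus $y$ yields a contradiction with the orientation data. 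The whole argument is a transcription of \cite[Lemma~12]{CapAGT06} with the extra bookkeeping that $\beta\neq-\alpha$ is now a hypothesis rather than automatic, so I will simply follow that proof, flagging where that hypothesis is invoked, and then read off $\langle\alpha,\beta^\vee\rangle<0$ from the finite dihedral residue $\langle r_\alpha,r_\beta\rangle$ via the classification in Lemma~\ref{lemma:dictionary}(2).
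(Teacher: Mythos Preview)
Your argument for (ii) is correct and matches the paper's proof essentially verbatim: if $x\in H(r_\beta\alpha)$ then $x'=r_\beta x\in H(\alpha)$, so $\partial\alpha$ separates the adjacent chambers $x,x'$, forcing $\alpha=\pm\beta$, a contradiction.

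For (i), however, there is a genuine gap. Your central claim that $\partial\alpha\cap\partial\beta\neq\varnothing$ is false in general. Take the affine Weyl group of type $\tilde A_1$ (infinite dihedral), with chambers the integer intervals $[n,n+1]$. Let $x=[0,1]$, let $\alpha$ be the root with $H(\alpha)=\{[n,n+1]:n\geq 3\}$, so $y=[3,4]$, and let $\beta$ be the root with $H(\beta)=\{[n,n+1]:n\leq 0\}$; then $\partial\beta$ is the wall of $x$ at $1$, $x'=[1,2]\in H(-\beta)$, $y\notin H(\beta)$, and $\beta\neq-\alpha$. All hypotheses are satisfied, yet $\partial\alpha$ and $\partial\beta$ are disjoint. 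Your proposed contradiction in the parallel case (``forcing $x'\in H(\alpha)$ or a shorter gallery'') does not materialise here: $x'=[1,2]\notin H(\alpha)$ and no shorter gallery exists.

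The paper handles (i) differently. It does \emph{not} assert that the walls cross; instead it splits into two cases. If $\langle r_\alpha,r_\beta\rangle$ is infinite, then since $\{\alpha,\beta\}$ is not nested (witnessed by $x\in H(\beta)\setminus H(\alpha)$ and $y\in H(\alpha)\setminus H(\beta)$), Lemma~\ref{lemma:dictionary}(2) forces $\{\alpha,-\beta\}$ to be nested, and Lemma~\ref{lemma:dictionary}(1) then gives $\langle\alpha,\beta^\vee\rangle<0$ directly. If $\langle r_\alpha,r_\beta\rangle$ is finite, the paper first extracts from \cite[Lemma~12]{CapAGT06} the concrete inclusion $H(r_\alpha(\beta))\supseteq H(\alpha)\cap H(\beta)$, and then checks in each rank-$2$ root system ($A_2$, $B_2$, $G_2$) that this inclusion forces the angle between the roots to be obtuse. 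Your treatment of the finite case is also too loose: the assertion that ``$x\in H(\beta)\cap H(-\alpha)$ and $y\in H(-\beta)\cap H(\alpha)$ forces an obtuse angle'' needs the minimality of $y$ (to exclude, e.g., the orthogonal case $m=0$), and you do not use it in a precise way.
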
	
\begin{proof}
For the first assertion, we invoke \cite[Lemma~12]{CapAGT06} that we apply to the roots $\phi:=\alpha$ and $\psi:=\beta$. We deduce  that   $r_\alpha(\beta) \neq \beta$ and that $H(r_{\alpha}(\beta))\supseteq H(\alpha)\cap H(\beta)$. If $\langle r_\alpha, r_\beta\rangle$ is infinite, then $\partial\alpha\cap\partial\beta=\varnothing$, and since $\{\alpha,\beta\}$ is not nested by construction, the pair $\{\alpha,-\beta\}$ is nested and $\la\alpha,\beta^{\vee}\ra<0$ (see Lemma~\ref{lemma:dictionary}(1)). If $\langle r_\alpha, r_\beta\rangle$ is finite, on the other hand, the root system generated by $\alpha$ and $\beta$ is an irreducible root system of rank~$2$ (see  Lemma~\ref{lemma:dictionary}(2)). A quick case-by-case inspection  of the rank 2 root systems of type $A_2$, $B_2$ and $G_2$ (see \cite[\S 9.3]{Hum78}) reveals that the condition $H(r_{\alpha}(\beta))\supseteq H(\alpha)\cap H(\beta)$ implies that the angle between the walls of $\alpha$ and $\beta$ (in the Euclidean plane spanned by $\alpha,\beta$ and with scalar product $(\cdot|\cdot)$ such that $\la\alpha, \beta^\vee\ra=2(\alpha|\beta)/(\alpha|\alpha)$, see \cite[\S 9.1]{Hum78}) is acute. Thus the angle between the corresponding roots is obtuse, hence $\la\alpha, \beta^\vee\ra<0$. This proves (i).

For the second assertion, let $x' \in \Ch(\Sigma)$ be the unique chamber adjacent to $x$ and contained in   $H(-\beta)$. If $x \in H(r_\beta(\alpha))$, then $x' = r_\beta(x) \in r_\beta(H(r_\beta(\alpha))) = H(\alpha)$. Since $x \not \in H(\alpha)$ by hypothesis, it follows that the wall $\partial \alpha$ separates $x$ from $x'$. Since $x$ and $x'$ are adjacent, we deduce that $\beta = -\alpha$, contradicting the hypotheses.  This proves (ii). 
\end{proof}

Given a set of roots $\Phi\subseteq\Delta$ and a subset $\Psi \subseteq \Phi$, we set 
$$\Phi_{\geq \Psi} := \{\phi \in \Phi \mid \phi \geq \psi \text{ for some } \psi \in \Psi\}.$$

\begin{lem}\label{lem:NonEmptyIntersec}
Let $\Phi\subseteq\Delta^{re}$ be a  closed set of roots and let $\Psi \subseteq \Phi$ be a non-empty finite subset. Assume that $\Phi \cap - \Phi = \varnothing$. Then $\bigcap_{\phi\in\Phi_{\geq \Psi}} H(\phi)$ contains a chamber. In particular, $\bigcap_{\psi\in\Psi} H(\psi)$ contains a chamber.
\end{lem}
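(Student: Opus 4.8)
The plan is to produce the desired chamber by a step‑by‑step descent starting from the fundamental chamber $C_0$, using Lemma~\ref{lem:12} at each step. For a chamber $x\in\Ch(\Sigma)$ write
$$N(x):=\{\phi\in\Phi_{\geq\Psi}\mid x\notin H(\phi)\}$$
for the set of roots of $\Phi_{\geq\Psi}$ that $x$ ``violates''. A chamber with $N(x)=\varnothing$ lies in $\bigcap_{\phi\in\Phi_{\geq\Psi}}H(\phi)$; since $\Psi\subseteq\Phi_{\geq\Psi}$ (each $\psi\in\Psi$ satisfies $\psi\geq\psi$), it then also lies in $\bigcap_{\psi\in\Psi}H(\psi)$, so it suffices to exhibit a chamber $x$ with $N(x)=\varnothing$.

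\emph{Step 1: $N(C_0)$ is finite.} Since $C_0\in H(\phi)$ exactly when $\phi\in\Delta^{re}_+$, we have $N(C_0)=\Phi_{\geq\Psi}\cap\Delta_-$. If $\phi$ lies in this set, then $\psi\leq\phi<0$ for some $\psi\in\Psi$, so $\psi\in\Delta_-$ and $\phi-\psi\in Q_+$ has height $\height(\phi)-\height(\psi)\leq -1-\height(\psi)$. As $\Psi$ is finite this bounds $\height(\phi-\psi)$ by a constant, and since $Q_+$ has only finitely many elements of bounded height, there are only finitely many possibilities for $\phi=\psi+(\phi-\psi)$.

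\emph{Step 2: the descent.} I claim that if $x$ is a chamber with $N(x)$ finite and non‑empty, then there is a chamber $x'$ with $N(x')$ finite and $|N(x')|<|N(x)|$. Granting this, starting from $C_0$ and iterating produces, after finitely many steps, a chamber with empty violation set, which finishes the proof. To construct $x'$, pick $\alpha\in N(x)$ for which $d:=\mathrm{d}_{\Ch}(x,H(\alpha))$ is minimal over $N(x)$. If $d=1$: the wall $\partial\alpha$ bounds $x$, the adjacent chamber $x':=r_\alpha(x)$ lies in $H(\alpha)$, and since $x$ and $x'$ differ only across $\partial\alpha$ while $-\alpha\notin\Phi$ (because $\alpha\in\Phi$ and $\Phi\cap-\Phi=\varnothing$), we get $N(x')=N(x)\setminus\{\alpha\}$, as wanted. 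If $d\geq 2$: let $y$ be the chamber of $H(\alpha)$ nearest to $x$, fix a minimal gallery from $x$ to $y$, and let $\partial\gamma$ be the first wall it crosses, normalised so that $x\in H(\gamma)$. Then $r_\gamma(x)$ is adjacent to $x$, $y\notin H(\gamma)$, and $\gamma\neq-\alpha$ (else $r_\gamma(x)\in H(\alpha)$ and $d=1$), so Lemma~\ref{lem:12} applies to the pair $(\alpha,\gamma)$ and yields $\langle\alpha,\gamma^\vee\rangle<0$ and $x\notin H(r_\gamma(\alpha))$; moreover $\mathrm{d}_{\Ch}(x,H(r_\gamma(\alpha)))=\mathrm{d}_{\Ch}(r_\gamma(x),H(\alpha))=d-1$. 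There are two cases. If $\gamma\notin\Phi$, passing to $x_1:=r_\gamma(x)$ changes only the side of $\pm\gamma$: as $\gamma\notin\Phi_{\geq\Psi}$ it is not added to the violation set, so $N(x_1)=N(x)\setminus\{-\gamma\}$ when $-\gamma\in\Phi_{\geq\Psi}$ (which lowers $|N|$) and $N(x_1)=N(x)$ otherwise (in which case $x_1$ still violates $\alpha$ but is strictly closer to $H(\alpha)$, so one iterates). If $\gamma\in\Phi$, then since $\langle\alpha,\gamma^\vee\rangle<0$ the $\gamma$‑string through $\alpha$ is unbroken and contains $\alpha,\alpha+\gamma,\dots,r_\gamma(\alpha)$, all of which are real roots; repeated use of the closedness of $\Phi$ gives $r_\gamma(\alpha)\in\Phi$, and one checks $r_\gamma(\alpha)\in\Phi_{\geq\Psi}$, so that $r_\gamma(\alpha)\in N(x)$ is a violated root strictly closer to $x$ than $\alpha$ — contradicting the minimality of $d$, so this case does not occur.

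\emph{The main obstacle.} The delicate point is precisely the bookkeeping of the violation set along the descent: one must guarantee that an elementary move never permanently enlarges $N(x)$, and must pin down the correct quantity that strictly decreases (here $|N(x)|$, refined by $\mathrm{d}_{\Ch}(x,H(\alpha))$ for a nearest violated root $\alpha$, and possibly the position of $x$ relative to $C_0$) so that each branch of the wall‑crossing analysis terminates in a genuine reduction or a contradiction. The verification that $r_\gamma(\alpha)\in\Phi_{\geq\Psi}$ in the last case — immediate when $\gamma$ is a positive root, but requiring care in general — together with the interplay between Lemma~\ref{lem:12}(i) (unbrokenness of root strings), Lemma~\ref{lem:12}(ii), and the closedness of $\Phi$, is where the real work of the argument lies.
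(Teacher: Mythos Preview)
Your descent shares the paper's core mechanism (Lemma~\ref{lem:12} plus a minimality argument), but the bookkeeping does not close. The unresolved case is exactly the one you flag: when $\gamma\in\Phi$ is a \emph{negative} root. Then $r_\gamma(\alpha)=\alpha-\langle\alpha,\gamma^\vee\rangle\gamma<\alpha$ in the partial order, so there is no reason for $r_\gamma(\alpha)$ to dominate any $\psi\in\Psi$, and your contradiction with the minimality of $d$ evaporates. (Concretely, take $\Psi=\{\alpha\}$; then $\Phi_{\geq\Psi}=\{\phi\in\Phi\mid\phi\geq\alpha\}$ and $r_\gamma(\alpha)<\alpha$ is simply not in $\Phi_{\geq\Psi}$.) Nor can you just cross $\partial\gamma$ in this situation: if moreover $\gamma\in\Phi_{\geq\Psi}$, then passing to $x_1=r_\gamma(x)$ adds $\gamma$ to the violation set, so $|N(x_1)|=|N(x)|+1$ and your lexicographic quantity $(|N(x)|,d)$ can go up. Since after a few steps the current chamber $x$ is arbitrary, nothing prevents the first-crossed wall from being negative, and the descent stalls.

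The paper closes this gap by a different organization. Rather than walking greedily from $C_0$, it inducts on $|A_-|$ (your $|N(C_0)|$) after allowing a $\Wc$-conjugation, and in the residual case it chooses the base chamber $x$ inside $\mathcal X:=\bigcap_{\phi\in\Phi_+}H(\phi)$, minimising \emph{jointly} over $x\in\mathcal X$ and $\alpha\in A_-$. That choice forces the first-crossed root $\beta$ to lie in $\Phi_+$ (otherwise $x_1\in\mathcal X$ would be strictly closer to $H(\alpha)$), and positivity of $\beta$ is precisely what makes $r_\beta(\alpha)\geq\alpha$, hence $r_\beta(\alpha)\in\Phi_{\geq\Psi}$, hence the contradiction. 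The conjugation step is what lets one restart the argument at such a well-placed chamber; your purely local walk has no analogue of it, and that is the missing idea.
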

\begin{proof}
For simplicity, we identify each half-space $H(\phi)$ ($\phi\in\Delta^{re}$) with its underlying set of chambers; we thus have to show that $\bigcap_{\phi\in\Phi_{\geq \Psi}} H(\phi)\neq\varnothing$.
For each set of roots $B \subseteq \Delta$, we set $B_\varepsilon = B \cap \Delta_\varepsilon$ for $\varepsilon \in \{+, -\}$. 

Set $A = \Phi_{\geq \Psi}$. Observe that the set $A_-$ is finite, since $\Psi$ is finite by hypothesis. We shall proceed by   induction on $|A_-|$. 

In the base case $|A_-| = 0$, we have $A \subseteq \Delta^{re}_+$, and each half-space $H(\phi)$ with $\phi \in A$ contains the fundamental chamber $C_0$. Thus $\bigcap_{\phi\in A} H(\phi)$ is nonempty in this case.

We assume henceforth that $A_-$ is non-empty. We set $\mathcal X = \bigcap_{\phi \in \Phi_+} H(\phi)$. In the special case where $\Phi_+$ is empty, we adopt the convention that $\mathcal X = \Ch(\Sigma)$. In all cases, we observe that $\mathcal X$ is non-empty since it contains  $C_0$.  
We now distinguish two cases. 

Assume first that there exists $\psi \in A_-$ such that $\mathcal X$ is not entirely contained in the half-space $H(-\psi)$. We may then choose a chamber $C \in \mathcal X \cap H(\psi)$. Let $w \in W$ be such that $wC = C_0$. For each $\phi \in \Phi_+$, we have $C \in \mathcal X \subseteq H(\phi)$, hence $C_0 \in H(w\phi)$. Therefore  $w\Phi_+ \subseteq \Delta_+$, and in particular $wA_+ \subseteq \Delta_+$. Moreover $wA \cap \Delta_+$ contains $w\psi$, since $C \in H(\psi)$. It follows that $|wA \cap \Delta_-| = |wA_- \cap \Delta_-| < |wA_-|= |A_-|$. Set 
$$B = wA  = w(\Phi_{\geq \Psi}) = w(\Phi)_{\geq w(\Psi)}.$$
We have just seen that $|B_-| < |A_-|$. Hence the induction hypothesis ensures that $\bigcap_{\phi\in B} H(\phi)\neq\varnothing$. Therefore, $\bigcap_{\phi\in A} H(\phi) = w^{-1}\bigg( \bigcap_{\phi\in B} H(\phi)\bigg)$ is also non-empty, and we are done in this case.

Assume finally that for all $\psi \in A_-$, we have  $\mathcal X \subseteq H(-\psi)$. Choose $x \in \mathcal X$ and $\alpha \in A_-$ such that $\dist_{\Ch}(x, H(\alpha))$ is minimal. Choose also $y \in H(\alpha)$ such that $\dist_{\Ch}(x, y) = \dist_{\Ch}(x, H(\alpha))$. Let $x = x_0, x_1, \dots, x_m = y$ be a minimal gallery from $x$ to $y$, and let $\beta \in \Delta^{re}$ be the unique root such that $H(\beta)$ contains $x_0$ but not $x_1$. 

We have $\beta \in \Phi_+$, since otherwise we would have    $x_1 \in H(\phi)$ for all $\phi \in \Phi_+$, so that $x_1 \in \mathcal X$, contradicting the minimality condition in the definition of $x$.  In particular, we have $\alpha \neq - \beta$ since  $\Phi \cap - \Phi = \varnothing$. 

Notice that $x$ is a chamber contained in $H(\beta)$ but not in $H(\alpha)$, whereas $y$ is a chamber contained in $H(\alpha)$ but not $H(\beta)$. We now invoke Lemma~\ref{lem:12}. It follows that $\langle \alpha, \beta^\vee \rangle < 0$ and that $ x \not \in H(r_\beta(\alpha))$. Since $\beta$ is a positive root, it follows that $r_\beta(\alpha) = \alpha - \langle \alpha, \beta^\vee \rangle \beta \geq \alpha \in A = \Phi_{\geq \Psi}$. Moreover $r_\beta(\alpha) \in \overline{\{\alpha, \beta\}} \subseteq \Phi$. Hence $r_\beta(\alpha) \in A$. In particular, $r_\beta(\alpha)\in\Delta_-$, for otherwise $x\in\mathcal X\subseteq H(r_\beta(\alpha))$, a contradiction. We conclude that $r_\beta(\alpha) \in A_-$. Now we observe that the gallery $x = x_0 = r_\beta(x_1), r_\beta(x_2), \dots, r_\beta(x_m) = r_\beta(y)$ is of length~$m-1$ and joins $x$ to a chamber in $H(r_{\beta}(\alpha))$. Since  $r_\beta(\alpha)$ belongs to $A_-$, this contradicts the minimality condition in the definition of $\alpha$. Thus this final case does not occur, and the proof is complete.
\end{proof}

\section{Nilpotent sets}

\begin{lem}\label{lemma:nilpotency_caract}
A subset $\Phi\subseteq\Delta^{re}$ is nilpotent if and only if it satisfies the following three conditions:
\begin{enumerate}[label=(\roman*)]
\item $\Phi$ is closed.
\item $\Phi$ is finite.
\item $\Phi \cap -\Phi = \varnothing$. 
\end{enumerate}
\end{lem}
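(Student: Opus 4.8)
The plan is to treat the two implications separately, the forward one being routine and the converse resting entirely on Lemma~\ref{lem:NonEmptyIntersec}.

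For the forward implication I would simply unwind the definitions: if $\Phi$ is nilpotent then by definition it is closed, giving~(i), and prenilpotent, which by the remarks in Section~2 forces $\Phi$ to be finite, giving~(ii). For~(iii) I would argue by contradiction: choosing $w\in\Wc$ with $w\Phi\subseteq\Delta_+$, an element $\alpha\in\Phi\cap-\Phi$ would yield simultaneously $w\alpha\in\Delta_+$ and $-w\alpha=w(-\alpha)\in\Delta_+$, which is impossible.

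For the converse, assume (i)--(iii). After disposing of the trivial case $\Phi=\varnothing$ (where prenilpotence is vacuous), the first step is to observe that $-\Phi$ again satisfies (i)--(iii): finiteness and the disjointness condition $(-\Phi)\cap\Phi=\varnothing$ are clear, and closedness of $-\Phi$ follows from that of $\Phi$ by negating everything (if $\alpha,\beta\in-\Phi$ and $\alpha+\beta\in\Delta$, then $-\alpha,-\beta\in\Phi$ with $(-\alpha)+(-\beta)\in\Delta$, so $-(\alpha+\beta)\in\Phi$, i.e.\ $\alpha+\beta\in-\Phi$). The second step is to apply Lemma~\ref{lem:NonEmptyIntersec} twice --- once to the closed set $\Phi$ with $\Psi=\Phi$, and once to the closed set $-\Phi$ with $\Psi=-\Phi$ --- to conclude that both $\bigcap_{\alpha\in\Phi}H(\alpha)$ and $\bigcap_{\alpha\in\Phi}H(-\alpha)$ contain a chamber of $\Sigma$. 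The final step is to invoke Lemma~\ref{lemma:dictionary}(3), which states precisely that this pair of conditions is equivalent to prenilpotence of $\Phi$; together with~(i), this shows $\Phi$ is nilpotent.

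I do not expect a genuine obstacle: the geometric heavy lifting is already carried out in Lemma~\ref{lem:NonEmptyIntersec}. The only points requiring minor care are verifying that $-\Phi$ inherits closedness and handling the empty set, both of which amount to bookkeeping.
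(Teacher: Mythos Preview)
Your proposal is correct and follows essentially the same approach as the paper: apply Lemma~\ref{lem:NonEmptyIntersec} to $\Phi$ and to $-\Phi$, then invoke Lemma~\ref{lemma:dictionary}(3). The paper dismisses the forward implication as ``clear by the definition of a nilpotent set of roots,'' whereas you spell out the details more carefully, but the substance is identical.
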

\begin{proof}
Assume that $\Phi\subseteq\Delta^{re}$ satisfies the three conditions (i)--(iii). Applying Lemma~\ref{lem:NonEmptyIntersec} with $\Phi = \Psi$, we deduce that $\bigcap_{\phi\in\Phi} H(\phi)$ contains a chamber. Similarly, Lemma~\ref{lem:NonEmptyIntersec} applied to $-\Phi$ implies that $\bigcap_{\phi\in\Phi} H(-\phi)$ also contains a chamber. Hence Lemma~\ref{lemma:dictionary}(3) implies that $\Phi$ is nilpotent, as desired. 

The converse assertion is clear by the definition of a nilpotent set of roots.
\end{proof}

\section{On the closure of a finite set}

\begin{lem}\label{lemma:ClosureFinite}
Let $\Phi\subseteq\Delta^{re}$ be a closed set of roots such that 
$\Phi \cap -\Phi = \varnothing$. For any  finite subset $\Psi \subseteq \Phi$, the closure $\overline \Psi$ is finite.
\end{lem}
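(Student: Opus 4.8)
The plan is to bound the closure $\overline{\Psi}$ by exhibiting a finite, explicitly described closed set containing $\Psi$, so that $\overline{\Psi}$ is trapped inside it. The natural candidate is $\Phi_{\geq\Psi}$ from Lemma~\ref{lem:NonEmptyIntersec}, but that set need not be finite a priori; the key point delivered by that lemma is that $\bigcap_{\phi\in\Phi_{\geq\Psi}}H(\phi)$ contains a chamber $C$. Applying the same lemma to $-\Psi\subseteq-\Phi$ (which is still closed with $(-\Phi)\cap-(-\Phi)=\varnothing$), we also get a chamber $D\in\bigcap_{\psi\in\Psi}H(-\psi)$. Thus $\Psi$ is prenilpotent by Lemma~\ref{lemma:dictionary}(3), hence finite and contained in $\Delta^{re}$, and more importantly its closure $\overline{\Psi}$ (computed inside $\Delta$) is again prenilpotent: indeed any closed superset of a prenilpotent set that is itself prenilpotent is nilpotent, so I need $\overline{\Psi}$ itself to be prenilpotent.

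So the heart of the argument is: show $\overline{\Psi}$ is prenilpotent, i.e.\ $w\overline{\Psi}\subseteq\Delta_+$ and $w'\overline{\Psi}\subseteq\Delta_-$ for suitable $w,w'\in\Wc$. The chambers $C,D$ above witness prenilpotency of $\Psi$, i.e.\ after translating by $w$ with $wC=C_0$ we may assume $\Psi\subseteq\Delta^{re}_+$; but I want the chamber to work for the whole closure. Here is where I would use that $C$ lies in $\bigcap_{\phi\in\Phi_{\geq\Psi}}H(\phi)$, not merely in $\bigcap_{\psi\in\Psi}H(\psi)$: the closure $\overline{\Psi}$ is contained in $\Phi$ (since $\Phi$ is closed and contains $\Psi$), and every element of $\overline{\Psi}$ is a sum of elements of $\Psi$, hence $\geq\psi$ for some $\psi\in\Psi$ when all summands are positive — but signs are the obstacle. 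The clean way: normalise so that $\Psi\subseteq\Delta_+$ (replace $\Phi$ by $w\Phi$). Then every root in $\overline{\Psi}$ is a nonnegative integer combination of the $\alpha_i$ appearing in $\Psi$ with height $\geq$ some height in $\Psi$, so $\overline{\Psi}\subseteq\Phi_{\geq\Psi}\cap\Delta_+\subseteq\bigcap$-witnessed-by-$C_0$ side; and $\overline{\Psi}\subseteq\Phi\cap-\Phi=\varnothing$-free so $\overline{\Psi}\cap\Delta_-=\varnothing$, giving one half. For the other half, apply Lemma~\ref{lem:NonEmptyIntersec} to $-\Psi\subseteq-\Phi$ and $(-\Phi)_{\geq-\Psi}$ to get a chamber in $\bigcap_{\phi\in\overline{\Psi}}H(-\phi)$, since $\overline{-\Psi}=-\overline{\Psi}\subseteq(-\Phi)_{\geq-\Psi}$.

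Concretely, I would argue: let $w\in\Wc$ with $wC=C_0$; then $w\Psi\subseteq\Delta^{re}_+$, and replacing $\Phi,\Psi$ by $w\Phi,w\Psi$ we may assume $\Psi\subseteq\Delta^{re}_+$. Since $\Phi$ is closed, $\overline{\Psi}\subseteq\Phi$. Each $\gamma\in\overline{\Psi}$ is obtained by iterated addition within $\Delta$ starting from $\Psi\subseteq Q_+$, so $\gamma\in Q_+$, i.e.\ $\overline{\Psi}\subseteq\Delta_+$; moreover $\gamma\geq\psi$ for every $\psi$ that is a ``first summand,'' so $\gamma\in\Phi_{\geq\Psi}$, whence $\overline{\Psi}\subseteq\Phi_{\geq\Psi}$. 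By Lemma~\ref{lem:NonEmptyIntersec}, $\bigcap_{\phi\in\Phi_{\geq\Psi}}H(\phi)\ni C_0$, so $C_0\in\bigcap_{\gamma\in\overline{\Psi}}H(\gamma)$, giving $w''\overline{\Psi}\subseteq\Delta_+$ with $w''=1$. Symmetrically, apply Lemma~\ref{lem:NonEmptyIntersec} to the closed set $-\Phi$ (with $(-\Phi)\cap-(-\Phi)=\varnothing$) and the finite subset $-\Psi$: since $\overline{-\Psi}=-\overline{\Psi}\subseteq(-\Phi)_{\geq-\Psi}$, the lemma gives a chamber in $\bigcap_{\gamma\in\overline{\Psi}}H(-\gamma)$. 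By Lemma~\ref{lemma:dictionary}(3), $\overline{\Psi}$ is prenilpotent, hence finite.

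The main obstacle I anticipate is the bookkeeping in the step $\overline{\Psi}\subseteq\Phi_{\geq\Psi}$: one must be careful that ``$\gamma\geq\psi$ for some $\psi\in\Psi$'' really does propagate through the closure operation. The cleanest justification is an induction on the stage of the closure construction — if $\gamma=\gamma_1+\gamma_2$ with $\gamma_1,\gamma_2$ already in the partially-built closure and (inductively) $\gamma_1\geq\psi$ for some $\psi\in\Psi$, then since $\gamma_2\in Q_+$ we get $\gamma=\gamma_1+\gamma_2\geq\gamma_1\geq\psi$. This uses crucially that after the initial normalisation all roots involved lie in $Q_+$, which in turn rests on the chamber $C$ supplied by Lemma~\ref{lem:NonEmptyIntersec} applied to $\Phi_{\geq\Psi}$ rather than merely to $\Psi$ — the stronger conclusion of that lemma is exactly what makes this work.
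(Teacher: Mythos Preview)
Your approach is the paper's in spirit---show $\Psi$ is prenilpotent via Lemma~\ref{lem:NonEmptyIntersec} and Lemma~\ref{lemma:dictionary}(3), then deduce that $\overline{\Psi}$ is prenilpotent and hence finite---but you take an unnecessary detour through the set $\Phi_{\geq\Psi}$ and, in doing so, introduce an error in the symmetric half.

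The specific false step: after normalising so that $\Psi\subseteq\Delta^{re}_+$, you claim $\overline{-\Psi}\subseteq(-\Phi)_{\geq-\Psi}$. This fails. If $\psi_1,\psi_2\in\Psi\subseteq Q_+$ and $-\psi_1-\psi_2\in\Delta$, then $-\psi_1-\psi_2\in\overline{-\Psi}$, but $-\psi_1-\psi_2<-\psi_i$ for $i=1,2$, so $-\psi_1-\psi_2$ is not $\geq$ any element of $-\Psi$ and hence does not lie in $(-\Phi)_{\geq-\Psi}$. Your own inductive justification in the last paragraph (``$\gamma=\gamma_1+\gamma_2\geq\gamma_1$ since $\gamma_2\in Q_+$'') relies on positivity of $\gamma_2$, which is precisely what breaks on the $-\Psi$ side under the same normalisation.

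The fix is to drop the $\Phi_{\geq\Psi}$ machinery altogether: it is not needed for either half. Once the ``in particular'' clause of Lemma~\ref{lem:NonEmptyIntersec}, applied to $(\Phi,\Psi)$ and to $(-\Phi,-\Psi)$, yields chambers in $\bigcap_{\psi\in\Psi}H(\psi)$ and in $\bigcap_{\psi\in\Psi}H(-\psi)$, Lemma~\ref{lemma:dictionary}(3) gives $w,w'\in\Wc$ with $w\Psi\subseteq\Delta_+$ and $w'\Psi\subseteq\Delta_-$. Since $\Delta_+$ and $\Delta_-$ are themselves closed, $w\overline{\Psi}=\overline{w\Psi}\subseteq\Delta_+$ and $w'\overline{\Psi}=\overline{w'\Psi}\subseteq\Delta_-$; thus $\overline{\Psi}$ is prenilpotent (hence nilpotent, hence finite). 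This is exactly the paper's proof.
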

\begin{proof}
Applying Lemma~\ref{lem:NonEmptyIntersec} to $\Psi$ and $-\Psi$, we deduce that the intersections 
$\bigcap_{\psi\in\Psi} H(\psi)$ and $\bigcap_{\psi\in\Psi} H(-\psi)$ both contain a chamber. Hence $\Psi$ is prenilpotent by  Lemma~\ref{lemma:dictionary}(3). The closure of any prenilpotent set of real roots is nilpotent, hence finite by Lemma~\ref{lemma:nilpotency_caract}. 
\end{proof}

\section{Pro-nilpotent sets}

\begin{prop}\label{prop:pronilpotent}
	Let $\Phi\subseteq\Delta^{re}$ be a closed set of real roots such that $\Phi\cap-\Phi=\varnothing$. Then $\Phi$ is pro-nilpotent.
\end{prop}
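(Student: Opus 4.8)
The plan is to show that $\Phi$ is a directed union of nilpotent subsets, and by Lemma~\ref{lemma:ClosureFinite} the natural candidates for the members of this union are the closures $\overline{\Psi}$ of finite subsets $\Psi\subseteq\Phi$. First I would observe that since $\Phi\cap-\Phi=\varnothing$ and $\Phi$ is closed, for any finite $\Psi\subseteq\Phi$ the closure $\overline\Psi$ is contained in $\Phi$ (because $\Phi$ itself is closed and contains $\Psi$), hence $\overline\Psi\cap-\overline\Psi\subseteq\Phi\cap-\Phi=\varnothing$; moreover $\overline\Psi$ is finite by Lemma~\ref{lemma:ClosureFinite}. Therefore each $\overline\Psi$ satisfies the three conditions (i)--(iii) of Lemma~\ref{lemma:nilpotency_caract}, and so $\overline\Psi$ is nilpotent.

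Next I would check that the family $\{\overline\Psi \mid \Psi\subseteq\Phi \text{ finite}\}$ is directed under inclusion and that its union is all of $\Phi$. Directedness is immediate: given finite subsets $\Psi_1,\Psi_2\subseteq\Phi$, the set $\Psi_1\cup\Psi_2$ is again a finite subset of $\Phi$, and $\overline{\Psi_1}\cup\overline{\Psi_2}\subseteq\overline{\Psi_1\cup\Psi_2}$ since the closure operator is monotone. The union equals $\Phi$ because every $\alpha\in\Phi$ lies in the finite subset $\{\alpha\}\subseteq\Phi$, hence in $\overline{\{\alpha\}}$. This exhibits $\Phi$ as a directed union of nilpotent sets, which is precisely the definition of pro-nilpotent.

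I do not expect any serious obstacle here: the proposition is essentially a formal consequence of the three earlier lemmas (Lemma~\ref{lem:NonEmptyIntersec}, Lemma~\ref{lemma:nilpotency_caract}, and Lemma~\ref{lemma:ClosureFinite}), with all the genuine work — the geometric gallery argument establishing non-emptiness of the relevant intersections of half-spaces — already done in Lemma~\ref{lem:NonEmptyIntersec}. The only point requiring a moment's care is the remark that $\overline\Psi\subseteq\Phi$, which uses that $\Phi$ is closed and hence $\overline\Psi$, being the smallest closed set containing $\Psi$, cannot escape $\Phi$; this is what guarantees condition (iii) passes from $\Phi$ to $\overline\Psi$. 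With that in hand the proof is a two-line assembly.
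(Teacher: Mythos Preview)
Your proof is correct and follows essentially the same approach as the paper: both exhibit $\Phi$ as a directed union of closures of finite subsets, invoking Lemma~\ref{lemma:ClosureFinite} for finiteness and Lemma~\ref{lemma:nilpotency_caract} for nilpotency. The only cosmetic difference is that the paper uses the ascending chain of height-bounded subsets $\Phi_{\leq n}$ rather than the full directed system of all finite subsets, and you are slightly more explicit in noting that $\overline\Psi\subseteq\Phi$ (which is needed to verify condition~(iii)).
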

\begin{proof}
Let $n$ be a positive integer and let $\Phi_{\leq n}$ denote the subset of those $\phi \in \Phi$ with $|\height(\phi)|\leq n$. Thus 	$\Phi_{\leq n}$  is finite for all $n$, and the sets $\Phi_{\leq n}$ are linearly ordered by inclusion. By Lemma~\ref{lemma:ClosureFinite}, the closure $ \overline{\Phi_{\leq n}}$ is finite, hence nilpotent by  Lemma~\ref{lemma:nilpotency_caract}. Thus $\Phi$ is the union of an ascending chain of nilpotent subsets.  
\end{proof}

\section{Levi decomposition}

\begin{lem}\label{lemma:Levi_dec_roots}
Let $\Psi\subseteq\Delta^{re}$ be a closed set of real roots. Set $\Psi_s:=\{\alpha\in\Psi \ | \ -\alpha\in\Psi\}$ and $\Psi_n:=\Psi\setminus\Psi_s$. Then $\Psi_s$ is closed and $\Psi_n$ is an ideal in $\Psi$. 
\end{lem}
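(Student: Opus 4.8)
The plan is to verify the two claims separately, using the combinatorial dictionary between roots and half-spaces together with the structural lemmas on closed sets established above.

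First I would show that $\Psi_s$ is closed. Take $\alpha, \beta \in \Psi_s$ with $\alpha + \beta \in \Delta$; since $\Psi$ is closed and $\Psi_s \subseteq \Psi$, we get $\alpha + \beta \in \Psi$, so it remains to check $-(\alpha+\beta) = (-\alpha) + (-\beta) \in \Psi$. But $-\alpha, -\beta \in \Psi_s \subseteq \Psi$ by definition of $\Psi_s$, and $(-\alpha)+(-\beta) \in \Delta$ since $\alpha+\beta\in\Delta$ and $\Delta = -\Delta$; hence closedness of $\Psi$ gives $-(\alpha+\beta) \in \Psi$. Therefore $\alpha+\beta \in \Psi_s$, as required. (One should note that $\alpha+\beta$ cannot be $0$ here: elements of $\Delta^{re}$ are nonzero, and this is automatic since $\alpha + \beta \in \Delta$.)

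Next I would show that $\Psi_n$ is an ideal in $\Psi$. Let $\alpha \in \Psi$, $\beta \in \Psi_n$, and suppose $\alpha + \beta \in \Delta$; we must show $\alpha + \beta \in \Psi_n$. Again closedness of $\Psi$ gives $\alpha+\beta \in \Psi$, so the content is that $\alpha + \beta \notin \Psi_s$, i.e. $-(\alpha+\beta) \notin \Psi$. Suppose for contradiction that $-(\alpha+\beta) \in \Psi$. Then $-(\alpha+\beta) \in \Psi$ and $\alpha \in \Psi$, and their sum is $-\beta$; if $-\beta \in \Delta$ then closedness of $\Psi$ forces $-\beta \in \Psi$, which together with $\beta \in \Psi$ would give $\beta \in \Psi_s$, contradicting $\beta \in \Psi_n$. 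So it suffices to know $-\beta \in \Delta$; but $\beta \in \Delta^{re}$ and $\Delta^{re} = -\Delta^{re}$, so indeed $-\beta \in \Delta^{re} \subseteq \Delta$. This yields the contradiction and completes the proof.

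I do not anticipate a serious obstacle: both parts are direct manipulations with the defining closure condition, the only subtlety being to keep track of when a sum of two roots is again a root (using $\Delta = -\Delta$ and $\Delta^{re} = -\Delta^{re}$, which hold because $\Wc$ and the map $\alpha \mapsto -\alpha$ preserve the relevant sets) and the elementary fact that real roots are nonzero. No appeal to the earlier geometric lemmas (Davis complex, Lemma~\ref{lem:NonEmptyIntersec}, etc.) is needed for this particular statement; those enter later when one identifies $\g_s$ as finite-dimensional semisimple and $\g_n$ as the nilradical.
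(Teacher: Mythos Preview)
Your proof is correct and follows essentially the same approach as the paper's: both parts are direct manipulations with the closedness condition, using $\Delta=-\Delta$ to see that $-(\alpha+\beta)\in\Delta$ (resp.\ $-\beta\in\Delta$) and then deriving the desired membership or contradiction. The only remark is that your opening sentence about using the ``dictionary between roots and half-spaces'' is misleading, since (as you yourself note at the end) no such input is needed here.
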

\begin{proof}
If $\alpha,\beta\in\Psi_s$ and $\alpha+\beta\in\Delta$, then $-\alpha,-\beta\in\Psi_s$ and hence $-(\alpha+\beta)\in\Psi$, that is, $\alpha+\beta\in\Psi_s$. This shows that $\Psi_s$ is closed. 

If now $\alpha\in\Psi$ and $\beta\in\Psi_n$ are such that $\alpha+\beta\in\Delta$, then $\alpha+\beta\in\Psi_n$. Otherwise, $-(\alpha+\beta)\in\Psi$, and hence $-\beta=-(\alpha+\beta)+\alpha\in\Psi$, contradicting the fact that $\beta\in\Psi_n$. This shows that $\Psi_n$ is an ideal in $\Psi$.
\end{proof}

\begin{lem}\label{lemma:finite_root_system}
Let $\Psi\subseteq\Delta^{re}$ be a closed set of real roots such that $\Psi=-\Psi$. Then $\Psi$ is the root system  of a finite-dimensional semisimple complex Lie algebra
\end{lem}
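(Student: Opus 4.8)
The statement to prove is that a closed set $\Psi \subseteq \Delta^{re}$ with $\Psi = -\Psi$ is the root system of a finite-dimensional semisimple complex Lie algebra. The key structural input is that $\Psi$ must be \emph{finite}: once finiteness is known, one checks the axioms of a (reduced, crystallographic) root system in a suitable real span, and invokes the classical correspondence between reduced root systems and semisimple Lie algebras (Serre's theorem / \cite[VI]{Bourbaki}).

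My first step would be to prove finiteness. Fix $\alpha \in \Psi$. Since $\Psi$ is closed and symmetric, for every $\beta \in \Psi$ the $\alpha$-string through $\beta$ behaves as in a rank-$2$ root subsystem: I claim $\{\alpha, \beta\}$ is prenilpotent for each $\beta \in \Psi \setminus \{\pm\alpha\}$. Indeed, if $\{\alpha,\beta\}$ were not prenilpotent, then by Lemma~\ref{lemma:dictionary}(1) we would have $\overline{\{\alpha,\beta\}} \not\subseteq \Delta^{re}$; but $\overline{\{\alpha,\beta\}} \subseteq \overline{\Psi} = \Psi \subseteq \Delta^{re}$ since $\Psi$ is closed — a contradiction. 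Hence $\{\alpha,\beta\}$ is prenilpotent, so $\la\alpha,\beta^\vee\ra$ and $\la\beta,\alpha^\vee\ra$ satisfy the finite rank-$2$ constraints, and in particular $r_\alpha(\beta) \in \overline{\{\pm\alpha,\pm\beta\}} \subseteq \Psi$. Thus $\Psi$ is stable under each $r_\alpha$, $\alpha \in \Psi$. Now to bound $|\Psi|$: apply Lemma~\ref{lem:NonEmptyIntersec} (via Lemma~\ref{lemma:ClosureFinite} / Proposition~\ref{prop:pronilpotent}) to $\Psi_+ := \Psi \cap \Delta_+$, which is closed with $\Psi_+ \cap -\Psi_+ = \varnothing$, to conclude $\Psi_+$ is pro-nilpotent; but I also need it finite. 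Here is the cleaner route: since $\{\alpha,\beta\}$ is prenilpotent for all pairs, $\la\alpha,\beta^\vee\ra \in \{0,\pm1,\pm2,\pm3\}$, so all roots of $\Psi$ lie in the finite-dimensional real space $V := \sum_{\alpha\in\Psi}\Rb\alpha$, and the integers $\la\alpha,\beta^\vee\ra$ are bounded. Pick a maximal linearly independent subset $\{\gamma_1,\dots,\gamma_\ell\} \subseteq \Psi$; every $\beta \in \Psi$ is determined by the tuple $(\la\beta,\gamma_1^\vee\ra,\dots,\la\beta,\gamma_\ell^\vee\ra)$ (nondegeneracy of the pairing on $V$ against the $\gamma_i^\vee$), and each entry is bounded, so $\Psi$ is finite.

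Once $\Psi$ is finite, I would verify directly that $\Psi \subseteq V$ is a reduced crystallographic root system in the sense of \cite[VI, \S1.1]{Bourbaki}: $\Psi$ spans $V$ and $0 \notin \Psi$; $\Psi \cap \Rb\alpha = \{\pm\alpha\}$ for $\alpha \in \Psi$ (reducedness — this holds because $2\alpha \notin \Delta$ for $\alpha \in \Delta^{re}$ in a Kac--Moody root system); $\Psi$ is stable under each $r_\alpha$ (shown above); and $\la\beta,\alpha^\vee\ra \in \Zb$ for all $\alpha,\beta \in \Psi$ (clear from the Kac--Moody setup). The nondegenerate invariant form on $V$ making the $r_\alpha$ orthogonal reflections comes from restricting the standard bilinear form, or can be built from the Cartan matrix of the finite rank-$2$ subsystems; alternatively one cites the rank-$2$ classification in Lemma~\ref{lemma:dictionary}(2) to see that any two roots span a finite root system, which forces the whole configuration to be classical. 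Then $\Psi$ decomposes into irreducible components of types $A_\ell, B_\ell, \dots, G_2$, and $\g_s' := \g_{\Psi} \oplus \sum_{\alpha\in\Psi}\CC\alpha^\vee$ is, by Serre's relations (which hold inside $\g(A)$ for the generators $e_\alpha \in \g_\alpha$, $f_\alpha \in \g_{-\alpha}$ with the finite Cartan matrix $(\la\alpha_j,\alpha_i^\vee\ra)$), a quotient of — hence isomorphic to, by simplicity of the pieces — the semisimple Lie algebra with root system $\Psi$.

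\textbf{Main obstacle.} The delicate point is finiteness of $\Psi$, and more precisely ruling out that the integers $\la\alpha,\beta^\vee\ra$ could conspire with infinitely many roots despite each pair being ``finite-type.'' The argument above handles it by reducing to boundedness of coordinates against a fixed basis of coroots, but one must be careful that the pairing $\Psi \times \{\gamma_i^\vee\} \to \Zb$ really separates points of $\Psi$ — equivalently, that $\Psi$ cannot contain two distinct roots with the same Cartan integers against every coroot in $\Psi$. This follows because $\beta - \beta'$ would then be orthogonal (in the relevant form) to all of $\Psi$ hence to $V$, forcing $\beta = \beta'$; making this rigorous requires either exhibiting the invariant form on $V$ or, more cheaply, using that $\Psi \subseteq \Delta^{re}$ sits inside the root lattice $Q$ where distinct roots are distinct lattice vectors and combining this with the coordinate bounds. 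A second, more bookkeeping-style obstacle is assembling the isomorphism $\g_s' \cong$ (semisimple Lie algebra) cleanly; the safest path is to cite the standard fact that a subalgebra of $\g(A)$ generated by $\SL_2$-triples attached to a finite sub-root-system of $\Delta^{re}$ is the semisimple Lie algebra of that root system, which is implicit in \cite[Chapters 1--5]{Kac}.
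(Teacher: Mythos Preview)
Your approach is viable and genuinely different from the paper's. Both arguments open the same way: closedness plus symmetry force every pair $\{\alpha,\beta\}\subseteq\Psi$ to be prenilpotent via Lemma~\ref{lemma:dictionary}(1) (since $\overline{\{\alpha,\beta\}}\subseteq\Psi\subseteq\Delta^{re}$), hence $\langle r_\alpha,r_\beta\rangle$ is finite and $\Psi$ is $r_\alpha$-stable. The divergence is at the finiteness step. You argue linear-algebraically: Cartan integers are bounded, so $\Psi$ is finite once the map $\beta\mapsto(\langle\beta,\gamma_i^\vee\rangle)_i$ is injective on $\Psi$. The paper instead passes to the reflection subgroup $H=\langle r_\alpha:\alpha\in\Psi\rangle\leq\Wc$, observes that $H$ is itself a Coxeter group (Deodhar) whose full set of reflections equals $R_\Psi$ (by the $r_\alpha$-stability just shown), and then invokes a result of H\'ee to the effect that an infinite Coxeter group contains two reflections generating an infinite dihedral group---contradicting pairwise finiteness. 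This is shorter and works uniformly without any symmetrizability assumption on $A$.

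The obstacle you flag is genuine and is exactly where your route costs more. Pairwise positive-definiteness on the $2$-planes $\Rb\alpha+\Rb\beta$ does \emph{not} by itself imply nondegeneracy of the form on $V$ (an affine $\widetilde A_2$-type Gram matrix already shows this). Your separation-of-points argument can be repaired in the symmetrizable case: if $\beta,\beta'\in\Psi$ have identical pairings against every $\gamma_i^\vee$, then also against $\beta^\vee,(\beta')^\vee\in\sum\Rb\gamma_i^\vee$, whence $\langle\beta',\beta^\vee\rangle=\langle\beta,(\beta')^\vee\rangle=2$, and Cauchy--Schwarz on the (positive-definite) plane $\Rb\beta+\Rb\beta'$ forces $\beta=\beta'$. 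In the non-symmetrizable case the inclusion $\beta^\vee\in\sum\Rb\gamma_i^\vee$ is no longer automatic, so extra care is needed; the paper's reflection-group argument bypasses this entirely. Finally, your Serre-relations discussion is more than the lemma asks for: once $\Psi$ is shown to be a finite reduced root system, the statement follows directly from the Bourbaki classification, which is all the paper cites.
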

\begin{proof}
Let $H$ be the reflection subgroup of $\mathcal W$ generated by $R_{\Psi}:=\{r_{\alpha} \ | \ \alpha\in\Psi\}$. Then $H$ is a Coxeter group in its own right (see \cite{Deo89}). Moreover, the set $M_{\Psi}$ of walls of $\Sigma(\Wc,S)$ corresponding to reflections in $R_{\Psi}$ is stabilised by $H$. Indeed, if $\alpha,\beta\in\Psi$, we have to check that $r_{\alpha}(\beta)=\beta-\beta(\alpha^{\vee})\alpha\in\Psi$. But as $\pm\alpha\in\Psi$, this follows from the fact that $\Psi$ is closed.
In particular, $H$ is finite, for otherwise it would contain two reflections $r_{\alpha},r_{\beta}\in R_{\Psi}$ generating an infinite dihedral group by \cite[Prop. 8.1, p. 309]{Hee}, and hence $\Psi$ would contain a non-prenilpotent pair of roots by Lemma~\ref{lemma:dictionary}(1,2), contradicting the fact that $\Psi\subseteq\Delta^{re}$ (see Lemma~\ref{lemma:dictionary}(1)). This shows that $\Psi$ is a finite (reduced) root system in the sense of \cite[VI, \S 1.1]{Bourbaki}, hence the root system of a finite-dimensional semisimple complex Lie algebra (see \cite[VI, \S 4.2 Th\'eor\`eme 3]{Bourbaki}), as desired.
\end{proof}

\begin{thm1}
Let $\Psi\subseteq\Delta^{re}$ be a closed set of real roots and let $\g$ be the subalgebra of $\g(A)$ generated by $\g_{\Psi}$. Set $\Psi_s:=\{\alpha\in\Psi \ | \ -\alpha\in\Psi\}$ and $\Psi_n:=\Psi\setminus\Psi_s$. Set also $\h_s:=\sum_{\gamma\in\Psi_s}\CC\gamma^{\vee}$, $\g_s:=\h_s\oplus\g_{\Psi_s}$ and $\g_n:=\g_{\Psi_n}$. Then
\begin{enumerate}
\item
$\g_s$ is a subalgebra and $\g_n$ is an ideal of $\g$. In particular, $\g=\g_s\ltimes\g_n$.
\item
$\g_n$ is the unique maximal nilpotent ideal of $\g$.
\item
$\g_s$ is a semisimple finite-dimensional Lie algebra with Cartan subalgebra $\h_s$ and set of roots $\Psi_s$.
\end{enumerate}
\end{thm1}
\begin{proof}
(1) follows from Lemma~\ref{lemma:Levi_dec_roots} and (3) from Lemma~\ref{lemma:finite_root_system}. For (2), note that $\Psi_n$ is a pro-nilpotent set of roots by Proposition~\ref{prop:pronilpotent}, and hence $\g_n$ is nilpotent by Lemma~\ref{lemma:pronilpotent_nilpotent}. Finally, since $\g_s$ is semisimple, the image of any nilpotent ideal $\mathfrak i$ of $\g$ under the quotient map $\g\to\g_s$ must be zero, that is, $\mathfrak i\subseteq\g_n$. Therefore, $\g_n$ is the unique maximal nilpotent ideal of $\g$, as desired.
\end{proof}

%=========================== The bibliography===================================

\bibliographystyle{amsalpha}
\bibliography{biblio-nil}

\end{document}